 \newtheorem{theorem}{Theorem}
 \newtheorem{proposition}[theorem]{Proposition}
  \newtheorem*{main theorem}{Main Theorem} %newthorem*とすると定理番号なしにできる
  \newtheorem*{main a}{Theorem A}
 \newtheorem*{main b}{Theorem B}
  \newtheorem*{main c}{Theorem C}
   \newtheorem*{corollary*}{Corollary}
   \theoremstyle{definition}
   \newtheorem{definition}[theorem]{Definition}
\newcommand{\R}{\mathbb R}%     real number
\newcommand{\mP}[1]{Q({#1})}% maximizing measure operator
\newcommand{\mmP}{Q}% maximizing measure operator 引数なし
\newcommand{\maximize}[1]{M_{{\rm max}}(#1)}% the set of maximizing measures of a function
\begin{document}

\title{
{\large Ergodic Maximizing Measures of Non-Generic, Yet Dense Continuous Functions}
}

\author{Mao Shinoda}

\address{Department of Mathematics,
	Keio University, Yokohama,
	223-8522, JAPAN} 
\email{shinoda-mao@z3.keio.ac.jp}
\address{Phone: +81-45-566-1641+42706}
\address{Fax: +81-45-566-1642}
\subjclass[2010]{28D05, 37D20, 37D35}
\thanks{{\it Keywords}: ergodic optimization; maximizing measure; non-generic proprety}
%\date{\today}

%%%%%%%%%%%%%%%%%%%%%%%%%%%%%%%%%%%%%%%%%%%%%%%%%%%%%%%%%
%%%%%%%%%%%%%%%%%%%%%%%%%%%%%%%%%%%%%%%%%%%%%%%%%%%%%%%%%

\begin{abstract}
	Ergodic optimization aims to single out dynamically invariant Borel probability measures which maximize the integral of a given ``performance" function.
	For a continuous self-map of a compact metric space and a dense set of continuous performance functions, 
	we show that the existence of uncountably many ergodic maximizing measures.
	We also show that, for a topologically mixing subshift of finite type and a dense set of continuous functions there exist uncountably many ergodic maximizing measures which are fully supported and have positive entropy.
\end{abstract}

\maketitle
%\newpage
%\tableofcontents

\markboth{Mao Shinoda}{Ergodic Maximizing Measures of Non-Generic, Yet Dense Continuous Functions}
%%%%%%%%%%%%%%%%%%%%%%%%%%%%%%%%%%%%%%%%%%%%%%%%%%%%%%%
\section{introduction}

Ergodic optimization aims to single out dynamically invariant Borel probability measures which maximize the integral of a given ``performance" function.
It originates in variational methods in mechanical systems \cite{Mane, Mather} and 
several applications can be considered, for instance, in the thermodynamic formalism \cite{Lopes}, 
in chaos control \cite{Hunt Ott, Yuan Hunt}. 
(See bibliographical notes in \cite{Jenkinson E} for more details).

Consider a continuous self-map $T$ of a compact metric space $X$ and
a continuous performance function $\phi: X\rightarrow \R$.
A $T$-invariant Borel probability measure $\mu$ is called a {\it $\phi$-maximizing measure} if 
\begin{align*}
	\max_{\nu\in M(X,T)}\int \phi\ d\nu=\int \phi\ d\mu,
\end{align*}
where $M(X,T)$ denotes the space of $T$-invariant Borel probability measures endowed with the weak*-topology.
We investigate properties of $\phi$-maximizing measures
by considering the set $\maximize{\phi}$ of all $\phi$-maximizing measures.
A performance function $\phi$ is {\it uniquely maximized} if $\maximize{\phi}$ is a singleton.
Jenkinson shows that 
a generic continuous function is uniquely maximized \cite{Jenkinson E}.
If $T$ has the specification property, 
the unique maximizing measure of a generic continuous function is fully supported and has zero entropy
\cite{Bousch Jenkinson, Bremont, Jenkinson E, Morris}.
However, it is difficult  to tell whether or not a given performance function is generic.
Indeed, no concrete example of a continuous function is known which 
is uniquely maximized by a fully supported measure \cite[Problems 3.9 and 4.3]{Jenkinson E}.
%This is a dilemma of investigation of generic properties.
Hence it is natural to investigate properties which are non-generic, yet hold for a reasonably large set of functions.

In this paper we pay attention to non-generic properties of maximizing measures.
Denote by $C(X)$ the space of continuous functions endowed with the supremum norm
and by $M_e(X,T)$ the set of ergodic elements of $M(X,T)$.
Recall that $M_e(X,T)$ is the set of extrema points of $M(X,T)$.
Let us say that $M_e(X,T)$ is arcwise-connected if
for every $\mu$ and $\nu\in M_e(X,T)$ there exists a homeomorphism $[0,1]\ni t\mapsto f_t \in M_e(X,T)$
on its image such that $f_0=\mu$ and $f_1=\nu$.
Note that if $M_e(X,T)$ is arcwise-connected then $M(X,T)$ is not a singleton.
We prove that the set of uncountably maximized continuous functions is dense in $C(X)$, provided $M_e(X,T)$ is arcwise-connected.

\begin{main a}
	Let $T$ be a continuous self-map of a compact metric space $X$.
	Suppose $M_e(X,T)$ is arcwise-connected.
	There exists a dense subset $\mathcal{D}$ of $C(X)$ such that
	for every $\phi$ in $\mathcal{D}$ the set $\maximize{\phi}$ contains uncountably many ergodic elements.
\end{main a}
Examples to which Theorem A applies include topologically mixing subshifts of finite type and Axiom A diffeomorphisms.
The space of invariant measures of these systems is the Poulsen simplex \cite{Sigmund1}:
the infinite simplex for which the set of its extremal points is dense.
%In general, if the space of invariant measures where the set of ergodic elements is dense is the Poulsen simplex, 
%since ergodic measures are extremal points of the space of invariant measures.
The denseness of the set of ergodic measures implies its arcwies-connectedness
because the Poulsen simplex and the set of its extremal points are homeomorphic to the Hilbert cube $[0,1]^\infty$ and its interior $(0,1)^\infty$ respectively \cite{Gelfert}.
Sigmund shows that
the specification, which the above examples actually have,  implies the denseness of the set of ergodic elements \cite{Sigmund2}.
Several extensions of Sigmund's result have been considered under some generalized versions of the specification (see \cite{Gelfert}).
In one-dimensional case,
Blokh shows that continuous topologically mixing interval maps have the specification \cite{Blokh}
and a discontinuous version is studied in \cite{Buzzi}.

The arcwise-connectedness of the set of ergodic measures is strictly weaker than the denseness of it.
For example the set of ergodic measures of the Dyck shift \cite{Krieger} is not dense but arcwise-connected.
The connectedness of the set of ergodic measures for some partially hyperbolic systems is studied in \cite{Gorodetski}.
On the other hand, there do exist systems for which $M_e(X,T)$ is not arcwise-connected: 
Cortez and Rivera-Letelier show that for the restriction of some logistic maps  to the omega limit set of the critical points, the sets of ergodic measures become totally disconnected \cite{Cortez}.

An idea of our proof of Theorem A is to perturb a given continuous function $\phi_0$
to create another $\phi$ 
so that the function $\mu\mapsto \int \phi\ d\mu$ defined on an arc of ergodic measures has a ``flat" part (see FIGURE \ref{BP}).
The Bishop Phelps theorem allows us to construct such a perturbation.
In order to use the Bishop Phelps theorem, we use the fact that maximizing measures are characterized as 
``tangent measures" to the convex functional
\begin{align*}
Q: C(X)\ni\phi\mapsto \max_{\nu\in M(X,T)}\int \phi\ d\nu\in \R.
\end{align*}
See Proposition \ref{maximizing measure}.
The use of the Bishop Phelps theorem has been inspired by \cite{Feliks} (see also \cite{Israel}).

It is worthwhile to remark that our perturbation does not work in the Lipschitz topoligy.
In the course of the proof of Theorem A we show the following statement.
\begin{corollary*}
Let $\phi_0\in C(X)$ and $\mu\in \maximize{\phi_0}$.
For any neighborhood $U$ of $\phi_0$ and any open neighborhood $V$ of $\mu$
there exists $\phi$ in $U$ such that
$V\cap \maximize{\phi}$ contains uncountably many ergodic elements.
\end{corollary*}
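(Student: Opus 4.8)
The plan is to translate the statement into the language of the convex functional $Q$ and then to create a maximal \emph{flat piece} along an arc of ergodic measures sitting inside $V$. By Proposition \ref{maximizing measure} a measure $m$ belongs to $\maximize{\phi}$ precisely when it is tangent to $Q$ at $\phi$, i.e. when $Q(\psi)\ge Q(\phi)+\int(\psi-\phi)\,dm$ for every $\psi\in C(X)$; in particular $\maximize{\phi}=\{m\in M(X,T):\int\phi\,dm=Q(\phi)\}$ is a weak$^\ast$-closed \emph{face} of $M(X,T)$. The crucial reduction is this: it suffices to produce a single $\phi\in U$ together with one measure $b\in V\cap\maximize{\phi}$ that is the barycenter of an arc of ergodic measures contained in $V$. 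Indeed, if $b=\int_0^\delta\gamma(t)\,d\lambda(t)$ for a homeomorphism $\gamma\colon[0,\delta]\to M_e(X,T)$ onto its image with image in $V$ and $\lambda$ normalized Lebesgue measure, then averaging the inequalities $\int\phi\,d\gamma(t)\le Q(\phi)$ against the equality $\int\phi\,db=Q(\phi)$ forces $\int\phi\,d\gamma(t)=Q(\phi)$ for $\lambda$-almost every $t$, hence for every $t$ by continuity. Thus the whole arc $\gamma([0,\delta])$ --- uncountably many ergodic measures, all lying in $V$ --- is contained in $\maximize{\phi}$, as required.

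Next I would build the arc and the measure $b$. Assuming $\mu$ ergodic (the case needed for Theorem A, where one takes $\mu$ to be an ergodic element of the nonempty compact face $\maximize{\phi_0}$; the general case additionally requires an ergodic measure near $\mu$), arcwise-connectedness furnishes an arc starting at $\mu$, and by continuity a short initial piece $\gamma\colon[0,\delta]\to M_e(X,T)$ has image inside $V$ (shrinking $V$ to a convex neighbourhood if necessary, so that the barycenter $b$ lies in $V$ as well). Since $\gamma(0)=\mu$ is $\phi_0$-maximizing and $\gamma(t)\to\mu$, the defect $\eta:=Q(\phi_0)-\int\phi_0\,db=\frac1\delta\int_0^\delta\bigl(Q(\phi_0)-\int\phi_0\,d\gamma(t)\bigr)\,dt$ is nonnegative and tends to $0$ as $\delta\to0$. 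In other words $b$ is an $\eta$-approximate maximizing measure for $\phi_0$, equivalently an $\eta$-subgradient of $Q$ at $\phi_0$, with $\eta$ as small as we like.

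The remaining task is to upgrade this approximate maximality of $b$ to \emph{exact} maximality at the cost of a small, controlled change of $\phi_0$ --- this is where the Bishop--Phelps theorem enters. Two features make the perturbation possible. First, distinct ergodic measures are mutually singular, so the measures $\gamma(t)$ leave ample room to add a small $\psi\in C(X)$ that flattens the profile $t\mapsto\int(\phi_0+\psi)\,d\gamma(t)$ to a constant. Second, the Bishop--Phelps theorem, in its subgradient form, converts the approximate tangency of $Q$ at $\phi_0$ along $b$ into genuine tangency at a nearby $\phi=\phi_0+\psi\in U$. Combining the two --- flatten along the arc, then use Bishop--Phelps to pin the global maximum $Q(\phi)$ onto the common value of the now-constant profile --- yields $\phi\in U$ with $b\in\maximize{\phi}$, and the face property of the first paragraph completes the argument.

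The main obstacle is precisely the word \emph{exact}. Bishop--Phelps and its quantitative Br{\o}ndsted--Rockafellar refinement natively deliver only an \emph{approximate} tangent functional and, worse, tend to displace the contact point away from the prescribed $b$; moreover an uncontrolled perturbation typically \emph{collapses} the maximizing set to a single measure (the generic, uniquely-maximized behaviour of Jenkinson), which would destroy the arc we are trying to preserve. Overcoming this means keeping the contact measure pinned at the barycenter $b$ while the functional moves only inside $U$. I would handle it through the duality $b\in\partial Q(\phi)\iff\phi\in\partial Q^\ast(b)$, where $Q^\ast$ is the conjugate of $Q$ (the support function of $M(X,T)$) and $\partial Q^\ast(b)$ is the normal cone to $M(X,T)$ at $b$: the estimate above says $\phi_0$ lies within $\eta$ of this cone, and the Bishop--Phelps machinery, together with the closedness of the graph $\{(\phi,m):m\in\maximize{\phi}\}$ and the mutual singularity of the $\gamma(t)$, should produce an \emph{exact} normal functional $\phi$ at $b$ with $\|\phi-\phi_0\|$ small. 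Verifying that this can be carried out without moving the contact point off $b$ is the technical heart of the proof.
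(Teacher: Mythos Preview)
Your skeleton is right --- arc in $M_e(X,T)$ starting at (an ergodic) $\mu$, barycenter $b$ with small defect $\eta=Q(\phi_0)-\int\phi_0\,db$, Bishop--Phelps, and the face/support property (Proposition \ref{ergodic decomp supp}) --- and your reduction in the first paragraph is exactly the mechanism the paper exploits. The divergence, and the genuine gap, is the step you yourself flag as ``the technical heart'': you insist on keeping the contact measure \emph{pinned at $b$} while moving only $\phi$. Bishop--Phelps does not deliver this, and the duality heuristic $b\in\partial Q(\phi)\Leftrightarrow\phi\in\partial Q^\ast(b)$ does not help: $\partial Q^\ast(b)$ is the set of $\phi$ for which $b$ is $\phi$-maximizing, i.e.\ for which \emph{every} arc measure $\gamma(t)$ is simultaneously maximizing, and there is no a priori reason why this affine subspace of $C(X)$ should pass $\eta$-close to $\phi_0$. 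Your ``flattening'' $\psi$ faces the same issue: even granting a small $\psi$ with $t\mapsto\int(\phi_0+\psi)\,d\gamma(t)$ constant, nothing forces that constant to equal $Q(\phi_0+\psi)$.

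The paper sidesteps the pinning problem entirely. It applies Bishop--Phelps to $(\phi_0,\mu_0)$ with $\mu_0=b$ and accepts \emph{both} outputs: a nearby $\phi$ \emph{and} a nearby tangent measure $\mu\in\maximize{\phi}$ with $\|\mu-\mu_0\|\le\varepsilon$. The missing ingredient in your plan is the Choquet-simplex isometry (Section~\ref{Ergodic Decomposition}): if $\alpha,\alpha_0$ are the ergodic decompositions of $\mu,\mu_0$, then $\|\alpha-\alpha_0\|=\|\mu-\mu_0\|\le\varepsilon$. From this one reads off $\alpha_0(\mathrm{supp}\,\alpha)\ge 1-\varepsilon-\rho>0$; since $\alpha_0$ is the (non-atomic) push-forward of Lebesgue on the short arc inside $V$, the set $\mathrm{supp}\,\alpha\cap\mathrm{supp}\,\alpha_0\subset V$ is uncountable and, by Proposition~\ref{ergodic decomp supp}, lies in $\maximize{\phi}$. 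So the point is not to prevent $b$ from moving, but to let it move and then control where the ergodic decomposition of the \emph{new} tangent measure lands via the total-variation bound on decompositions.
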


On the other hand, in the space of Lipschitz continuous functions
a phenomenon called a  ``lock up on periodic orbits" occurs:
for a Lipschitz continuous functions which is uniquely maximized by a periodic measure
one cannot realize a perturbation breaking the uniqueness of maximizing measure \cite{Yuan Hunt} (See also \cite{Lopes}). 
%Hence the sensitivity to a perturbation of performance functions depends significantly on the regularity.

For the subshift of finite type,
one can choose the arc of ergodic measures used in the proof of Theorem A
from fully supported measures with positive entropy.
Hence, slightly modifying the proof of Theorem A we obtain the next theorem.

\begin{main b}
	Let $(X,T)$ be a topologically mixing subshift of finite type.
	There exists a dense subset $\mathcal{D}$ of $C(X)$ such that
	for every $\phi$ in $\mathcal{D}$ the set $\maximize{\phi}$ contains uncountably many ergodic elements
	which are fully supported and positive entropy.
\end{main b}

The rest of this paper is organized as follows. 
In Section \ref{Preliminary} we collect preliminary results on functional analysis and invariant Borel probability measures.
In Section \ref{proof of main} the theorems are proved.

%\begin{remark}
%\end{remark}
 \begin{figure}[htbp]
 \begin{center}
  \includegraphics[width=150pt]{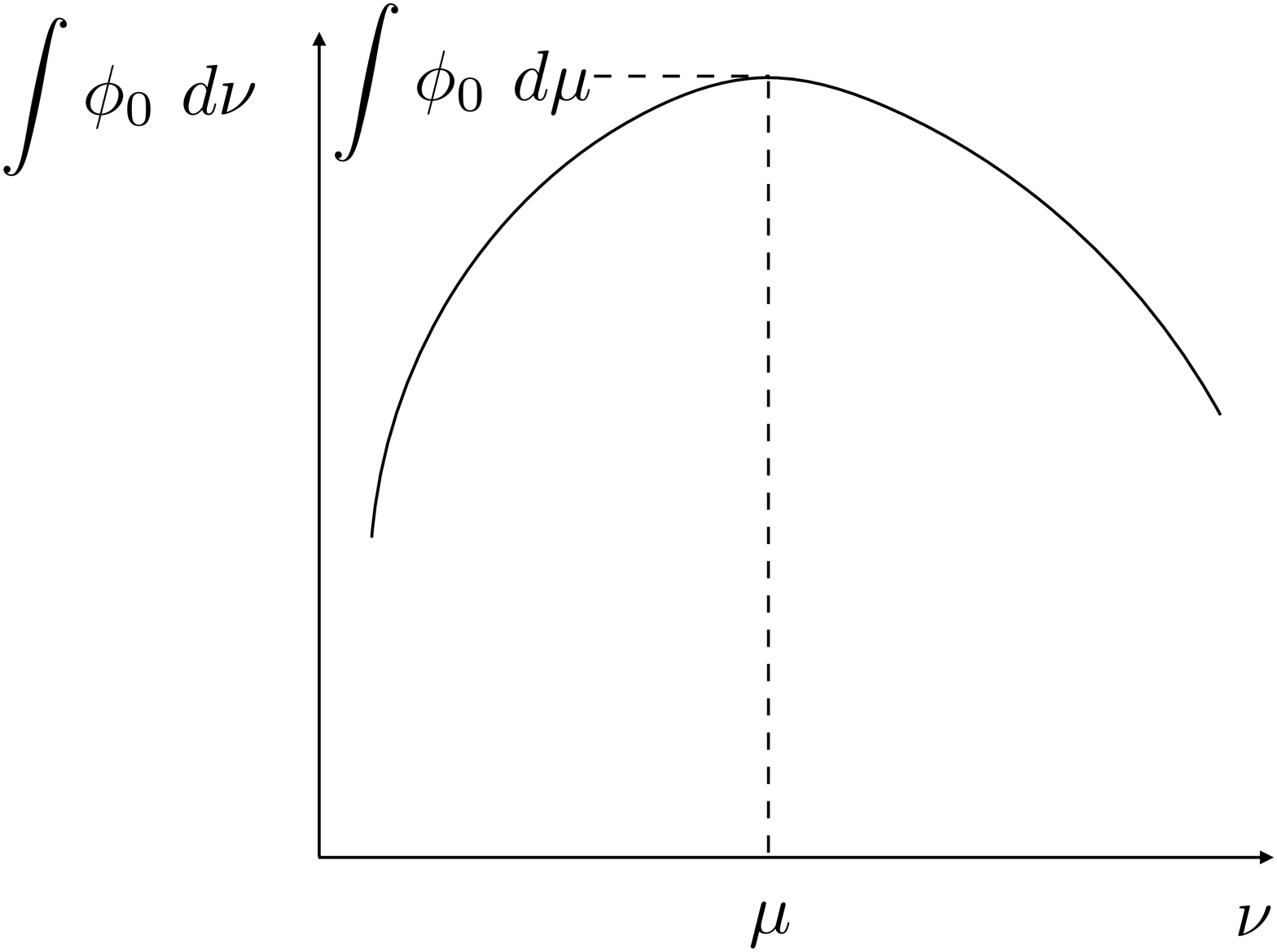}
  \raise 2cm\hbox{$\underset{\mbox{perturb}}{\longrightarrow}$}
   \includegraphics[width=150pt]{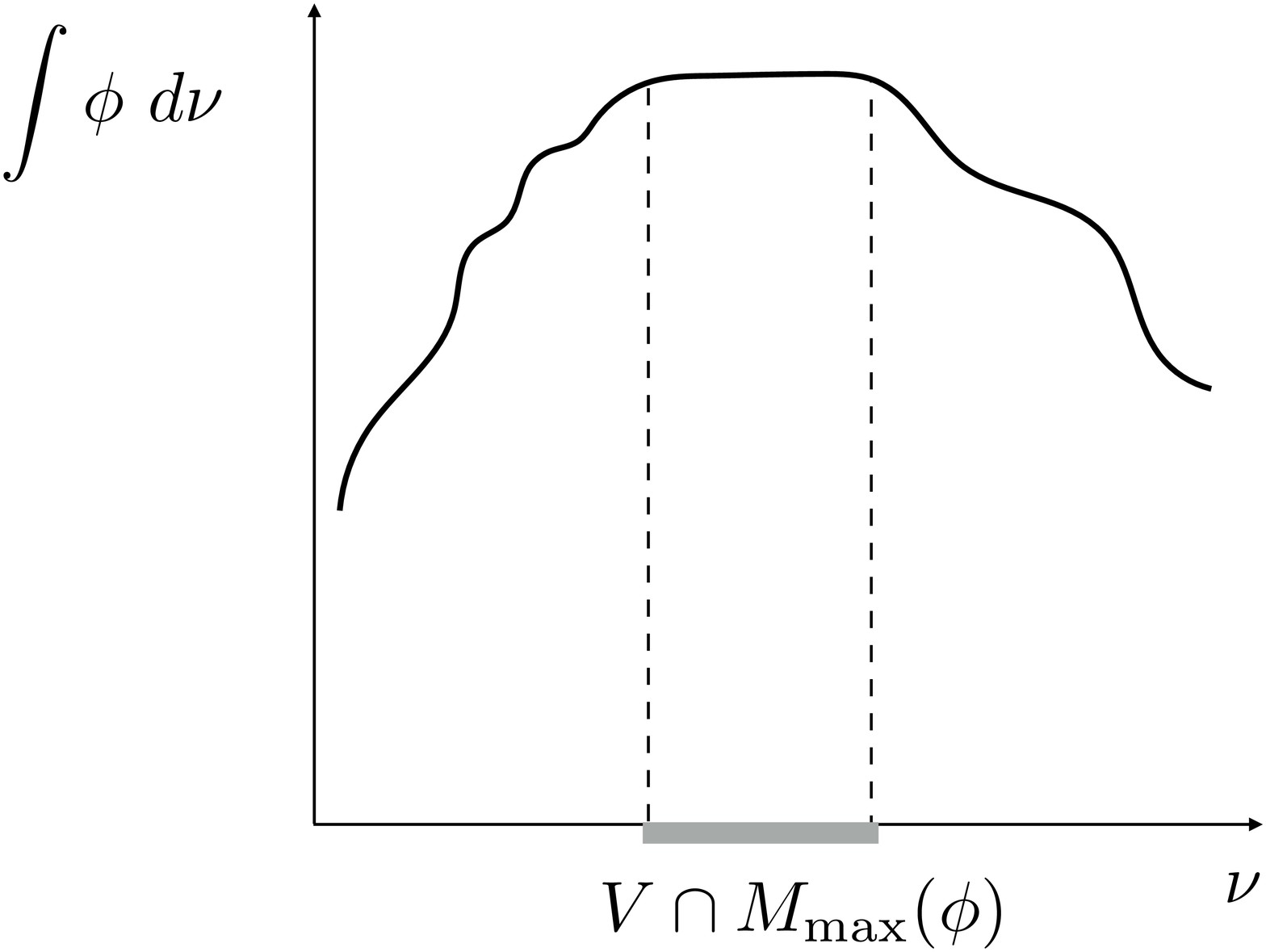}
 \caption{A schematic picture of the perturbation: a given function $\phi_0$ (left); the perturbed one $\phi$ (right).} \label{BP}
 \end{center}
\end{figure}

 %%%%%%%%%%%%%%%%%%%%%%%%%%%%%%%%%%%%%%%%%%%%%%%%%%%%%%%%
 \section{Preliminaries} \label{Preliminary}
 %----------------------------------------------------
 \subsection{Bishop Phelps Theorem}
First we see the Bishop Phelps theorem, which is concerned with a convex functional on a Banach space.
We begin with definitions of basic notions.

 \begin{definition} \label{tangent bdd}
 	A functional $\Gamma : V\rightarrow \R$ on a Banach space $V$ is {\it convex} if 
	$$\Gamma(t\phi+(1-t)\psi)\leq t\Gamma(\phi)+(1-t)\Gamma(\psi)$$ for all $\phi, \psi\in V$ and $t\in [0,1]$.
	
 	 Let $\Gamma : V\rightarrow \R$ be a convex and continuous functional on a Banach space $V$.
	 A bounded linear functional $F$ is {\it tangent} to $\Gamma$ at $\phi\in V$ if
	 \begin{align*}
	 	F(\psi)\leq\Gamma(\phi+\psi)-\Gamma(\phi)
	 \end{align*}
	 for all $\psi\in V$.
	 
	 A bounded linear functional $F$ is {\it bounded} by $\Gamma$ if
	$ % \begin{align*}
	 	F(\psi)\leq\Gamma(\psi)
	 $ %\end{align*}
	 for all $\psi\in V$.
 \end{definition}
For a bounded linear functional $F$ on a Banach space $V$ define 
\begin{align*}
	\|F\|=\sup\{|F(\phi)| : \phi\in V\ \mbox{with}\ \|\phi\|=1\}. %\label{norm}
\end{align*}
This becomes a norm of the set $V^*$ of all bounded linear functionals on $V$.
Note that $V^*$ becomes a Banach space with this norm.
  The Bishop Phelps theorem states that 
$\Gamma$-bounded functionals can be approximated by $\Gamma$-tangent ones
with respect to this norm.

 \begin{theorem}\cite[Theorem V.1.1.]{Israel} \label{BP theorem}
 Let $\Gamma : V\rightarrow \R$ be a convex and continuous functional on a Banach space $V$.
  For every bounded linear functional $F_0$ bounded by $\Gamma$, $\phi_0\in V$ and $\varepsilon>0$,
there exist a bounded linear functional $F$ and $\phi\in V$ such that $F$ is tangent to $\Gamma$ at $\phi$ and 
\begin{align*}
	\|F_0-F\|\leq\varepsilon\quad \mbox{and} \quad \|\phi_0-\phi\|\leq\frac{1}{\varepsilon}(\Gamma(\phi_0)-F_0(\phi_0)+s(F_0)),
\end{align*}
where $s(F_0)=\sup\{F_0(\psi)-\Gamma(\psi): \psi\in V\}$.
\end{theorem}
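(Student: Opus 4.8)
The plan is to derive the statement from Ekeland's variational principle applied to the convex continuous functional $g := \Gamma - F_0$ on the Banach space $V$. First I would unpack the hypothesis that $F_0$ is bounded by $\Gamma$: it says precisely that $F_0(\psi) - \Gamma(\psi) \le 0$ for all $\psi$, so $s(F_0) = \sup_{\psi}(F_0(\psi) - \Gamma(\psi))$ is finite, or equivalently $g$ is bounded below, with
\[
\inf_{V} g = -\,s(F_0).
\]
The point of this reformulation is that the excess of $g$ at the base point is exactly $g(\phi_0) - \inf_V g = \Gamma(\phi_0) - F_0(\phi_0) + s(F_0)$, which is the very numerator appearing in the asserted estimate. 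Since $\Gamma$ is convex and continuous and $F_0$ is continuous, $g$ is convex, continuous, and bounded below on the complete space $V$, so Ekeland's principle applies; this is where the hypothesis on $F_0$ is used.

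Next I would run Ekeland's principle with $\phi_0$ as an approximate minimizer of gap $\delta := \Gamma(\phi_0) - F_0(\phi_0) + s(F_0)$ and with scaling parameter $\lambda := \delta/\varepsilon$. This produces a point $\phi \in V$ with
\[
\|\phi - \phi_0\| \le \lambda = \frac{1}{\varepsilon}\bigl(\Gamma(\phi_0) - F_0(\phi_0) + s(F_0)\bigr),
\]
which is the second assertion, and with the property that $\phi$ minimizes the perturbed functional $\psi \mapsto \Gamma(\psi) - F_0(\psi) + \varepsilon\|\psi - \phi\|$. (If $\delta = 0$, then $\phi_0$ already minimizes $g$ and one takes $\phi = \phi_0$.) Comparing the value at $\phi$ with that at $\phi + t\eta$, dividing by $t$ and letting $t \downarrow 0$, the minimality yields the one-sided bound $\Gamma'(\phi; \eta) \ge F_0(\eta) - \varepsilon\|\eta\|$ for every direction $\eta \in V$, where $\Gamma'(\phi; \eta)$ denotes the convex directional derivative of $\Gamma$ at $\phi$.

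Finally I would upgrade this near-stationarity to an exact tangent functional close to $F_0$. The set $\mathcal{T}$ of bounded linear functionals tangent to $\Gamma$ at $\phi$ is nonempty, convex, and weak*-compact (by continuity of $\Gamma$ together with the Banach--Alaoglu theorem), and so is the ball $F_0 + \varepsilon B$, where $B$ is the closed unit ball of $V^*$. Were these two convex weak*-compact sets disjoint, the Hahn--Banach separation theorem would furnish a direction $\eta \in V$ strictly separating them; evaluating the support functions of the two sets along $\eta$ and using the identity $\Gamma'(\phi;\eta) = \max\{F(\eta) : F \in \mathcal{T}\}$ would contradict the inequality obtained above. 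Hence $\mathcal{T} \cap (F_0 + \varepsilon B) \ne \emptyset$, which produces a functional $F$ tangent to $\Gamma$ at $\phi$ with $\|F - F_0\| \le \varepsilon$, completing the proof.

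I expect the main obstacle to be precisely this last conversion: Ekeland's principle delivers only an approximate stationarity, expressed as a directional-derivative inequality, and promoting it to the existence of an \emph{exact} tangent functional within distance $\varepsilon$ of $F_0$ is where the geometry enters, through weak*-compactness of the tangent set and a separation argument. An alternative that avoids Ekeland altogether is the original Bishop--Phelps approach: one orders $V$ by the cone $\{\psi : F_0(\psi) \ge \varepsilon\|\psi\|\}$, extracts a maximal element of a suitable order interval using completeness of $V$, and reads off the tangent functional from maximality; the quantitative estimates emerge the same way, with the separation step replaced by a Zorn-type maximality argument.
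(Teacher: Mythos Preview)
The paper does not supply its own proof of this theorem: it is quoted as a preliminary tool with a citation to \cite[Theorem V.1.1.]{Israel}, and the argument there follows the classical Bishop--Phelps route via the partial order induced by the cone $\{\psi : F_0(\psi) \ge \varepsilon\|\psi\|\}$---exactly the alternative you sketch in your last paragraph.

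Your proposed argument via Ekeland's variational principle is correct and is a standard modern approach to this circle of results. The reduction to $g=\Gamma-F_0$ and the identification $\inf g = -s(F_0)$ are right; Ekeland delivers the point $\phi$ with the stated distance bound and the approximate stationarity $\Gamma'(\phi;\eta)\ge F_0(\eta)-\varepsilon\|\eta\|$; and the separation step is sound once one uses that the subdifferential $\mathcal{T}=\partial\Gamma(\phi)$ is nonempty, convex, weak*-compact with support function $\Gamma'(\phi;\cdot)$, so a separating $\eta$ would force $\Gamma'(\phi;\eta)<F_0(\eta)-\varepsilon\|\eta\|$, a contradiction. The degenerate case $\delta=0$ is handled as you say (then $F_0$ itself lies in $\partial\Gamma(\phi_0)$). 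Compared with the Bishop--Phelps ordering proof, your route trades the Zorn/maximal-element construction for Ekeland plus a convex-analysis identity and a weak* separation; both yield identical quantitative conclusions, and neither is materially shorter, but the Ekeland version makes the variational content (approximate minimizer $\Rightarrow$ nearby exact critical point) more transparent.
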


%-------------------------------------------------------------------------------------
\subsection{The Space of Borel Probability Measures}
Denote by $M(Y)$ the set of all Borel probability measures on a compact metric space $Y$.
In our setting, $M(X,T)$ is compact and metrizable in the weak*-topology.
Then we will also consider $M(M(X,T))$ in the following section.
Denote by $C(Y)$ the Banach space of all continuous functions on $Y$ with the supremum norm.
By the Riesz representation theorem, the set
\begin{align*}
	\left\{F \in C(Y)^* : F\ \mbox{is positive and normalized} \right\}
\end{align*}
can be identified with $M(Y)$.
With this identification,
the norm and the notion of being tangent and being bounded in Definition \ref{tangent bdd} carry over to elements of $M(Y)$.
For $\mu\in M(X,T)$ we denote by $\mu(\phi)$ the integral of a continuous function $\phi$ by $\mu$.

%--------------------------------------------------------------------------------------
\subsection{Ergodic Decomposition} \label{Ergodic Decomposition}

For a $T$-invariant measure $\mu$ 
there exists a unique Borel probability measure $\alpha$ on $M(X,T)$ 
such that $\alpha(M(X,T)\setminus M_e(X,T))=0$ and 
\begin{align*}
	\mu(\phi)=\int_{M_e(X,T)} m(\phi)\ d\alpha(m) %\label{ergodic decomposition}
\end{align*}
for all $\phi\in C(X)$.
We call $\alpha$ the ergodic decomposition of $\mu$.
Note that $M(X,T)$ is a nonempty compact convex set
and $M_e(X,T)$ coincides with the set of its extremal points.
Since the ergodic decomposition of a $T$-invariant measure is unique, $M(X,T)$ is a Choquet simplex.
From the Theory of a Choquet simplex,
for the ergodic decompositions $\alpha_1, \alpha_2$ of  $\mu_1, \mu_2\in M(X,T)$,
 we have $\|\mu-\nu\|=\|\alpha_\mu-\alpha_\nu\|$.
 See \cite[Appendix A.5]{Ruelle} and the references therein.
 
%%%%%%%%%%%%%%%%%%%%%%%%%%%%%%%%%%%%%%%%%%%%%%%%%%%%%%%%
\section{Proofs of the theorems} \label{proof of main}
%--------------------------------------------------------------------------------------------------------------------------------------------------
\subsection{On the proof of Theorem A} \label{proof of main a}
Define a functional $\mmP : C(X)\rightarrow \R$ by
 \begin{align*}
 	\mP{\phi}=\max\left\{\mu(\phi) : \mu\in M(X,T)\right\}.
 \end{align*}
 Note that $\mmP$ is continuous and convex.
Maximizing measures are characterized by tangency to $Q$.
%For $\phi\in C(X)$ we denote by $\maximize{\phi}$ the set of all $\phi$-maximizing measures.
%

\begin{proposition}\cite[Lemma 2.3]{Bremont} \label{maximizing measure}
Let $T$ be a continuous self-map of a compact metric space $X$
and $\phi \in C(X)$. 
Then $\mu\in M(X,T)$ is tangent to $\mmP$ at $\phi$
if and only if 
$\mu\in \maximize{\phi}$.
\end{proposition}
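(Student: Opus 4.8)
The plan is to verify both implications directly from the definitions, exploiting only that $\mmP$ is the pointwise maximum of the evaluation functionals $\nu \mapsto \nu(\cdot)$ over $\nu \in M(X,T)$, together with the elementary identity $\mP{0}=\max_\nu \nu(0)=0$. Throughout I identify the invariant probability measure $\mu$ with the bounded linear functional $\psi \mapsto \mu(\psi)$, as set up in Section~\ref{Preliminary}, so that the abstract notion of tangency from Definition~\ref{tangent bdd} applies to $\mu$.

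First I would treat the implication that a maximizing measure is tangent. Suppose $\mu \in \maximize{\phi}$, so that $\mu(\phi)=\mP{\phi}$. For an arbitrary $\psi \in C(X)$, the definition of $\mmP$ as a maximum gives $\mP{\phi+\psi} \geq \mu(\phi+\psi) = \mu(\phi)+\mu(\psi) = \mP{\phi}+\mu(\psi)$, where the middle equality is just linearity of the integral. Rearranging yields $\mu(\psi) \leq \mP{\phi+\psi}-\mP{\phi}$ for every $\psi$, which is precisely the statement that $\mu$ is tangent to $\mmP$ at $\phi$. This direction requires nothing beyond the subadditivity implicit in the maximum.

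For the converse, suppose $\mu$ is tangent to $\mmP$ at $\phi$, i.e.\ $\mu(\psi) \leq \mP{\phi+\psi}-\mP{\phi}$ for all $\psi \in C(X)$. The inequality $\mu(\phi) \leq \mP{\phi}$ holds automatically because $\mu \in M(X,T)$ and $\mmP$ is the maximum over such measures, so only the reverse inequality is at stake. The key step is to test tangency against $\psi=-\phi$, which gives $\mu(-\phi) \leq \mP{0}-\mP{\phi}$. Since $\mP{0}=0$, this reads $-\mu(\phi) \leq -\mP{\phi}$, that is $\mu(\phi)\geq\mP{\phi}$. Combining the two inequalities gives $\mu(\phi)=\mP{\phi}$, so $\mu \in \maximize{\phi}$.

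I do not expect a genuine obstacle here: the argument is short and self-contained once the functional-analytic dictionary of Section~\ref{Preliminary} is in place. The only point demanding any care is the choice of the correct test functions, and in fact the entire nontrivial content reduces to substituting $\psi=-\phi$ into the tangency inequality and invoking $\mP{0}=0$.
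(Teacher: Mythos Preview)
Your argument is correct and is exactly the standard short verification of this fact. Note that the paper does not actually supply a proof of this proposition: it simply cites \cite[Lemma~2.3]{Bremont}, so there is no in-paper argument to compare against; your two-line check via $\psi=-\phi$ and $\mP{0}=0$ is precisely what lies behind that citation.
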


First we consider the ergodic decomposition of a $\phi$-maximizing measure.
The following proposition states that invariant measures in the support of the ergodic decomposition of a $\phi$-maximizing measure are also $\phi$-maximizing.
The support of an ergodic decomposition $\alpha$ is defined by 
 $\mbox{supp}(\alpha)=\bigcap C$ where the intersection is taken over all closed subsets $C$ of $M(X,T)$ with $\alpha(C)=1$.
 Note that $\alpha({\rm supp}(\alpha))=1$, since $M(X,T)$ has a countable basis.

\begin{proposition} \label{ergodic decomp supp}

Let $T$ be a continuous self-map of a compact metric space $X$ and $\phi\in C(X)$.
Let $\mu\in \maximize{\phi}$ and let $\alpha$ be the ergodic decomposition of $\mu$.
Then ${\rm supp}(\alpha)$ is contained in $\maximize{\phi}$.
\end{proposition}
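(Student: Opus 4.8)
The plan is to observe that the evaluation map $m\mapsto m(\phi)$ is bounded above by $\mP{\phi}$ on all of $M(X,T)$, while the ergodic decomposition exhibits the maximal value $\mu(\phi)=\mP{\phi}$ as its $\alpha$-average. An extremal-value argument then forces this map to equal $\mP{\phi}$ for $\alpha$-almost every $m$, and continuity promotes the almost-everywhere statement to one holding on the entire support.

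First I would set $g:M(X,T)\to\R$, $g(m)=m(\phi)$. This map is continuous by the very definition of the weak*-topology, and $g(m)\le \mP{\phi}$ for every $m\in M(X,T)$ by the definition of $\mmP$. Since $\mu$ is $\phi$-maximizing, we have $\mu(\phi)=\mP{\phi}$.

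Next I would apply the ergodic decomposition identity to the fixed function $\phi$ itself, obtaining
\[
\mP{\phi}=\mu(\phi)=\int_{M_e(X,T)} g(m)\, d\alpha(m),
\]
so that $\int(\mP{\phi}-g)\, d\alpha=0$ with non-negative integrand $\mP{\phi}-g\ge 0$. Hence $g=\mP{\phi}$ for $\alpha$-almost every $m$; equivalently, the set $A=\{\,m\in M(X,T):g(m)=\mP{\phi}\,\}$ satisfies $\alpha(A)=1$. Because $g\le\mP{\phi}$ everywhere, we may write $A=\{\,m:g(m)\ge\mP{\phi}\,\}$, which is closed by continuity of $g$.

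Finally, since ${\rm supp}(\alpha)$ is by definition the intersection of all closed subsets of $M(X,T)$ of full $\alpha$-measure, and $A$ is such a set, it follows that ${\rm supp}(\alpha)\subseteq A$. Every $m\in A$ satisfies $m(\phi)=\mP{\phi}$, i.e.\ $m\in\maximize{\phi}$, so ${\rm supp}(\alpha)\subseteq\maximize{\phi}$, as desired. The only step requiring genuine care is the passage from the $\alpha$-almost-everywhere equality to the everywhere statement on the support: the key is to realize the equality $\{g=\mP{\phi}\}$ as a \emph{closed} set (using $g\le\mP{\phi}$ together with continuity of $g$) so that the topological definition of support applies directly. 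Everything else is formal.
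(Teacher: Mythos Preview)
Your proof is correct and follows essentially the same approach as the paper: both show that the set $\maximize{\phi}=\{m:m(\phi)=\mP{\phi}\}$ has full $\alpha$-measure and is closed, whence it contains ${\rm supp}(\alpha)$. The only cosmetic difference is that the paper argues by contradiction (assuming the complement has positive $\alpha$-measure and deriving $\mu(\phi)<\mu(\phi)$), whereas you argue directly from the vanishing integral of the nonnegative function $\mP{\phi}-g$.
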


\begin{proof}

	Let $N=\{\nu \in M(X,T) : \int \phi \ d\nu<\int \phi \ d\mu\}$.
	Suppose $\alpha(N)>0$. 
	Then
	\begin{align*}
		\mu(\phi)
			&=\int_{M(X,T)} m(\phi) \ d\alpha(m)\\
			&=\int_{N} m(\phi) \ d\alpha(m)+\int_{M(X,T)\setminus N} m(\phi) \ d\alpha(m)\\
			&<\alpha(N)\mu(\phi)+\alpha(M(X,T)\setminus N)\mu(\phi)\\
			&=\mu(\phi).
	\end{align*}
	This is a contradiction and then we have $\alpha(N)=0$. 
	Since $M(X,T)\setminus N=\maximize{\phi}$ and $\maximize{\phi}$ is closed, we have ${\rm supp}(\alpha) \subset \maximize{\phi}$.
\end{proof}

Second we construct a non-atomic Borel probability measure $\alpha$ on $M(X,T)$ supported in $M_e(X,T)$ for a given $\phi\in C(X)$.
The point of the construction is this ${\rm supp}(\alpha)$ gives positive weight to the set of
ergodic measures
for which the integrals of $\phi$ are $\varepsilon$-close to the maximum value $Q(\phi)$.
The arcwise-connectedness of $M_e(X,T)$ is essential for the following construction.

\begin{proposition} \label{non-atomicmeasure}

	Let $T$ be a continuous self-map of a compact metric space $X$.
	Suppose $M_e(X,T)$ is arcwise-connected.
	Then for every $\phi\in C(X)$
	there is a non-atomic Borel probability measure $\alpha$ on $M(X,T)$ such that
	${\rm supp}(\alpha)\subset M_e(X,T)$ and 
	for all $\varepsilon>0$,
	\begin{align*}
		\alpha\left(\left\{\mu \in M_e(X,T): \mP{\phi}-\varepsilon\leq\mu(\phi)\right\}\right)>0.
	\end{align*}
\end{proposition}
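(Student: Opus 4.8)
The plan is to exploit the arcwise-connectedness of $M_e(X,T)$ to build an injective continuous path of ergodic measures that \emph{starts} at an ergodic maximizing measure, and then to let $\alpha$ be the push-forward of Lebesgue measure on $[0,1]$ along this path. Injectivity of the path will force $\alpha$ to be non-atomic, while the fact that the path begins at a maximizer, together with the weak*-continuity of $\mu\mapsto\mu(\phi)$, will yield the required positivity for every $\varepsilon$.

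First I would produce an ergodic maximizing measure $\mu^*$. The set $\maximize{\phi}$ is nonempty by compactness of $M(X,T)$ and continuity of $\mu\mapsto\mu(\phi)$. Choosing any $\mu\in\maximize{\phi}$ and letting $\alpha_\mu$ be its ergodic decomposition, Proposition \ref{ergodic decomp supp} gives ${\rm supp}(\alpha_\mu)\subset\maximize{\phi}\cap M_e(X,T)$; since $\alpha_\mu$ is a probability measure its support is nonempty, so there is an ergodic $\mu^*\in\maximize{\phi}$, i.e. $\mu^*(\phi)=\mP{\phi}$. Because $M_e(X,T)$ is arcwise-connected it contains more than one point, so I may pick a second ergodic measure $\nu\neq\mu^*$. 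Arcwise-connectedness then supplies a homeomorphism onto its image $f\colon[0,1]\to M_e(X,T)$ with $f(0)=\mu^*$ and $f(1)=\nu$. I would then set $\alpha=f_*\lambda$, the push-forward of Lebesgue measure $\lambda$ on $[0,1]$, which is a Borel probability measure on $M(X,T)$.

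It remains to check the three properties. For the support, $f([0,1])$ is compact, hence closed in $M(X,T)$, and carries full $\alpha$-mass, so ${\rm supp}(\alpha)\subset f([0,1])\subset M_e(X,T)$. For non-atomicity, for any $p\in M(X,T)$ the fiber $f^{-1}(\{p\})$ contains at most one point by injectivity of $f$, whence $\alpha(\{p\})=\lambda(f^{-1}(\{p\}))=0$. For the positivity, fix $\varepsilon>0$ and consider the continuous function $g(t)=f(t)(\phi)$ on $[0,1]$; since $g(0)=\mu^*(\phi)=\mP{\phi}$, continuity gives $\delta>0$ with $g(t)\geq\mP{\phi}-\varepsilon$ for all $t\in[0,\delta)$. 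As every $f(t)$ is ergodic, $f^{-1}(\{\mu\in M_e(X,T):\mP{\phi}-\varepsilon\leq\mu(\phi)\})=\{t\in[0,1]:g(t)\geq\mP{\phi}-\varepsilon\}\supset[0,\delta)$, so that $\alpha(\{\mu\in M_e(X,T):\mP{\phi}-\varepsilon\leq\mu(\phi)\})\geq\lambda([0,\delta))=\delta>0$.

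I do not expect a serious obstacle: the construction is essentially forced once the hypothesis is read correctly. The only points needing care are (i) extracting an ergodic maximizer, for which Proposition \ref{ergodic decomp supp} is exactly the tool, and (ii) phrasing the positivity computation through the path parameter so as to sidestep any measurability question about $M_e(X,T)$ inside $M(X,T)$ — working with $f^{-1}$ of a \emph{closed} set of $M(X,T)$ keeps everything inside the Borel $\sigma$-algebra of $[0,1]$. The essential role of arcwise-connectedness is precisely to furnish the injective continuous path, which simultaneously delivers non-atomicity and a neighborhood of the maximizer on which $\int\phi$ stays within $\varepsilon$ of $\mP{\phi}$.
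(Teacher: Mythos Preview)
Your proof is correct and follows essentially the same route as the paper's: push Lebesgue measure forward along an arc in $M_e(X,T)$ that starts at an ergodic $\phi$-maximizer, use injectivity of the arc for non-atomicity, and use continuity of $t\mapsto f_t(\phi)$ at $t=0$ for the positivity condition. You are in fact a bit more careful than the paper in explicitly extracting an \emph{ergodic} maximizer (the paper simply writes ``let $\mu$ be a $\phi$-maximizing measure'' and tacitly treats it as an element of $M_e(X,T)$).
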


\begin{proof}

	Pick $\phi \in C(X)$ and let $\mu$ be a $\phi$-maximizing measure.
	Pick $\nu\in M_e(X,T)\setminus\{\mu\}$.
	By the assumption, there exists an arc $f$ from $\mu$ to $\nu$.
	Let ${\rm Leb}_{[0,1]}$ denote the Lebesgue measure on $[0,1]$ and $\alpha=f_{*}{\rm Leb}_{[0,1]}$. 
	Since $f$ is a homeomorphism, the inverse image of a point in $f([0,1])$ is a singleton.
	Hence $\alpha$ is non-atomic.
	Since $t\in [0,1]\mapsto \int \phi \ df_t$ is continuous, the set
	$\{t\in [0,1] : \mP{\phi}-\varepsilon\leq\int \phi\ df_t\}$
	has nonempty interior.
	Since the image of the set by $f$ is contained in $M_e(X,T)$,
	$\alpha$ is supported in $M_e(X,T)$ and satisfies the desired inequality.
\end{proof}

%============================
We now prove Theorem A.
\begin{proof}[Proof of Theorem A]

	Note that every Borel probability measure $\mu$ is bounded by $Q$.
	Pick $\phi_0\in C(X)$ and $0<\varepsilon<\frac{1}{2}$.  
	Let $\tilde{\alpha}$ be a non-atomic Borel probability measure for which the conclusion of Proposition \ref{non-atomicmeasure} holds with $\phi=\phi_0$.
	Put $\delta=\varepsilon^2$ and
	\begin{align*}
	A_\delta=\{\mu \in M_e(X,T) : \mP{\phi_0}-\delta\leq\mu(\phi_0 )\}.
	\end{align*}
	Denote by $\alpha_0$ the conditinal measure of $\alpha_0$ on $A_\delta$, namely
	\begin{align*}
		\alpha_0(B)=\frac{1}{\tilde{\alpha}(A_\delta)}\tilde{\alpha}(A_\delta\cap B)
	\end{align*}
	for every Borel subsets $B$ of $M(X,T)$.
	Since $\tilde{\alpha}(A_\delta)>0$ by Proposition \ref{non-atomicmeasure}, $\alpha_0$ is well-defined.
	Note that $\alpha_0$ is also supported in $M_e(X,T)$.
	
	Let $\mu_0=\int_{M_e(X,T)}m \ d\alpha_0(m)$.
	By Theorem \ref{BP theorem} applied to $\phi_0$, $\mu_0$ and $\varepsilon$,
	 there exist $\phi\in C(X)$ and $\mu\in M(X)$ such that $\mu$ is tangent to $\mmP$ at $\phi$,
	 $\|\mu-\mu_0\|\leq\varepsilon$ and 
	 \begin{align*}
	 \|\phi-\phi_0\|
	 	\leq\frac{1}{\varepsilon}\left(\mP{\phi_0}-\mu_0(\phi_0 )\right)
		\leq\frac{1}{\varepsilon}\delta=\varepsilon.
	 \end{align*}
	 Then $\phi$ is $\varepsilon$-close to $\phi_0$
	 and by Proposition \ref{maximizing measure} $\mu$ is a $\phi$-maximizing measure.
	 
	 Next we show the existence of uncountably many ergodic $\phi$-maximizing measures.
	Let $\alpha$ be the ergodic decomposition of $\mu$
	and we have
	\begin{align}
	\|\alpha-\alpha_0\|=\|\mu-\mu_0\|\leq\varepsilon.\label{ergodic decomp iso 2}
	\end{align}
	Let $\rho>\frac{1-\varepsilon}{2}>0$.
	Since $\alpha_0$ is a Borel probability measure and ${\rm supp}(\alpha)$ is a closed set, there is an open set $U$ such that
	${\rm supp}(\alpha)\subset U$ and $\alpha_0(U\setminus {\rm supp}(\alpha))<\rho$.
	Since $M(X,T)$ is a metric space, there is a continuous function $g : M(X,T)\rightarrow [0,1]$
	which vanishes on $M(X,T)\setminus U$ and is identically $1$ on ${\rm supp}(\alpha)$.
%	such that$g(x)=1$ for every $x\in {\rm supp}(\alpha)$ and $g(x)=0$ for every $x\in M(X,T)\setminus U$.
	Hence we have
	\begin{align*}
		\alpha_0({\rm supp}(\alpha))
			>\alpha_0(U)-\rho
			\geq \int g\ d\alpha_0-\rho.
	\end{align*}
	The inequality in \eqref{ergodic decomp iso 2} implies 
	\begin{align*}
		-\varepsilon\leq\int h\ d\alpha -\int h\ d\alpha_0\leq\varepsilon
	\end{align*}
	for all $h \in C(M(X,T))$ with $\|h\|=1$.
	Hence we have
	\begin{align}
		\alpha_0({\rm supp}(\alpha))	\label{support ineq}
			&>\int  g\ d\alpha_0-\rho\\\nonumber
			&\geq \int g\ d\alpha-\rho-\varepsilon\\\nonumber
			&\geq \alpha({\rm supp}(\alpha))-\rho-\varepsilon
			=1-\rho-\varepsilon>0. \nonumber
	\end{align}

	Since $\alpha_0$ is non-atomic and supported in $M_e(X,T)$,
	${\rm supp}(\alpha)$ contains uncountably many ergodic elements.
	By Proposition \ref{ergodic decomp supp} 
	we have ${\rm supp}(\alpha)\subset \maximize{\phi}$,
	and the proof is complete.
\end{proof}

%--------------------------------------------------------------------------------------------------------------------------------------------------
\subsection{On the proof of Theorem B}\label{proof of main b}

The following result by Sigmund is essential for our proof of Theorem B.
For $\mu,\nu\in M_e(X,T)$ a continuous function $[0,1]\in t \mapsto f_t\ni M_e(X,T)$ which satisfies $f_0=\mu$ and $f_1=\nu$ is called a {\it path} from $\mu$ to $\nu$.

\begin{theorem}\cite{Sigmund3}\label{construction of Sigmund}
Let $(X,T)$ be a topologically mixing subshift of finite type.
Then for every $\mu, \nu\in M_e(X,T)$ there exists a path $f$ from $\mu$ to $\nu$ with the following properties:
%\begin{description}
	(i) for every measure $m\in f([0,1])$, $f^{-1}(\{m\})$ is a countable set;
	(ii) every measure $m\in f([0,1])$ except for countably many ones is fully supported and has positive entropy. 
%\end{description}
\end{theorem}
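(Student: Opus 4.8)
The plan is to build the path by hand from the specification property, which a topologically mixing subshift of finite type enjoys. Fix once and for all the measure of maximal entropy $\omega$ (the Parry measure): it is ergodic, fully supported, and has entropy $\log\lambda>0$, where $\lambda>1$ is the Perron eigenvalue of the transition matrix of a nontrivial mixing subshift of finite type. I would first reduce the statement to a one-sided version: for every ergodic $\mu$ there is a path $g:[0,1]\to M_e(X,T)$ with $g_0=\mu$ and $g_1=\omega$ such that every fibre $g^{-1}(\{m\})$ is countable and $g_s$ is fully supported with positive entropy for every $s\in(0,1]$. Granting this, a path from $\mu$ to $\nu$ is obtained by concatenating the path for $\mu$ with the time-reversed path for $\nu$ and reparametrising; the only measures that can fail to be fully supported with positive entropy are then the two endpoints $\mu$ and $\nu$, while every fibre of the concatenation is a union of two countable sets, hence countable.

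Next I would construct $g$ by randomised block concatenation. Using specification, fix connecting words of bounded length joining any admissible word to any other while remaining in $X$. For a block length $L$ form two kinds of macro-block: a $\mu$-macro-block is a length-$L$ word sampled according to $\mu$ followed by a connector, and an $\omega$-macro-block is a length-$L$ word sampled according to $\omega$ followed by a connector. Let $g_s^{L}$ be the shift-invariant measure of the process that emits macro-blocks independently, choosing a $\mu$-macro-block with probability $1-s$ and an $\omega$-macro-block with probability $s$. Because the macro-blocks are chosen independently, $g_s^{L}$ is ergodic, indeed mixing; because $\omega$ is fully supported and its blocks occur with frequency $s>0$ for $s>0$, the measure $g_s^{L}$ charges every admissible cylinder and inherits positive entropy from the $\omega$-blocks, so $g_s^{L}$ is fully supported with positive entropy whenever $s\in(0,1]$. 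To make the endpoints exact I let the block length grow near the ends, $L=L(s)\to\infty$ as $s\to0^+$ and as $s\to1^-$, so that the connectors occur with vanishing frequency and the sampled internal statistics converge; this forces $g_s\to\mu$ as $s\to0$ and $g_s\to\omega$ as $s\to1$. I would then verify weak$^*$-continuity of $s\mapsto g_s$ against test functions depending on finitely many coordinates, keeping $L(s)$ locally constant on the interior so that only the Bernoulli parameter varies there.

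Finally, for the countable-fibre condition I would force injectivity on $(0,1)$: assuming $\mu\neq\omega$, pick $\phi_0\in C(X)$ with $\mu(\phi_0)\neq\omega(\phi_0)$, so that $s\mapsto\int\phi_0\,dg_s\approx(1-s)\,\mu(\phi_0)+s\,\omega(\phi_0)$ is strictly monotone once the connector error is controlled below the monotone variation; hence the fibres of $g$ are singletons, and after concatenation at most doubletons. The degenerate cases ($\mu=\omega$, or $\mu=\nu$) are handled by routing through an auxiliary fully supported positive-entropy measure rather than through a constant path, so that no fibre becomes uncountable. I expect the main obstacle to be the simultaneous reconciliation of three demands on $g$: ergodicity of each $g_s$, weak$^*$-continuity of $s\mapsto g_s$ up to and including the endpoints, and exact endpoint matching. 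The tension is structural, since exact endpoints force the block lengths to diverge at $s=0,1$ whereas ergodicity and continuity are most transparent for fixed finite blocks, and it is precisely the specification property that keeps the glued construction admissible inside $X$ as the blocks are lengthened.
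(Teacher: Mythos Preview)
The paper does not prove this theorem; it is quoted from Sigmund \cite{Sigmund3} and used as a black box in the proof of Theorem~B, so there is no in-paper argument to compare against. Your overall strategy---route every ergodic measure to the measure of maximal entropy and interpolate by random concatenation of $\mu$-blocks and $\omega$-blocks glued via specification---is in the spirit of Sigmund's original construction.

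There is, however, a genuine gap in the continuity step. You ask $L(s)$ to be ``locally constant on the interior'' of $[0,1]$ and simultaneously to diverge as $s\to 0^+$ and $s\to 1^-$; on a connected interval a locally constant integer-valued function is constant, so what you must intend is that $L$ is piecewise constant with jump points accumulating only at the endpoints. But then at each jump $s_0$ you must show that the one-sided limits of $g_s$ agree, and nothing in your construction forces the block-concatenation measures $g^{L}_{s_0}$ and $g^{L'}_{s_0}$ built with two different block lengths at the same Bernoulli parameter to coincide. Sigmund deals with exactly this by threading the path through an explicit countable family of anchor measures (built from periodic orbits) placed at the jump parameters, and designing the arcs on either side so that both one-sided limits hit the same anchor. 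Your injectivity argument has a related weakness: the relation $\int\phi_0\,dg_s\approx(1-s)\mu(\phi_0)+s\omega(\phi_0)$ carries a connector error of order $1/L(s)$, and strict monotonicity needs this error dominated uniformly by $|\mu(\phi_0)-\omega(\phi_0)|$, which is not automatic for the bounded values of $L$ you use on the interior. Since the statement only demands countable fibres, not injectivity, this device is unnecessary: once the path is assembled from countably many injective sub-arcs meeting at the anchors, countability of every fibre is immediate.
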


\begin{proof}[Proof of Theorem B]

Pick $\phi_0\in C(X)$ and $0<\varepsilon<\frac{1}{2}$.
We obtain a non-atomic Borel probability measure on $M_e(X,T)$ by modifying the proof of Proposition \ref{non-atomicmeasure}.
Let $\mu$ be a $\phi_0$-maximizing measure and pick $\nu\in M_e(X,T)\setminus\{\mu\}$.
Let $f$ be a path from $\mu$ to $\nu$ for which the conclusion of Theorem \ref{construction of Sigmund} holds.
Since the inverse image of any point is countable, 
$\tilde{\alpha}=f_*{\rm Leb}_{[0,1]}$ becomes a non-atomic Borel probability measure supported in $M_e(X,T)$.
Then for $\tilde{\alpha}$ the inequality in Proposition \ref{non-atomicmeasure} holds with $\phi_0$ and $\varepsilon^2$.

Following the proof of Theorem A,
we define $\alpha_0$ to be the restriction of $\tilde{\alpha}$ to
the set $A_{\varepsilon^2}=\{\mu \in M_e(X,T) : \mP{\phi_0}-\varepsilon^2\leq\mu(\phi_0 )\}$ 
and obtain $\phi\in C(X)$
and a Borel probability measure $\alpha$
such that $\|\phi_0-\phi\|\leq\varepsilon$ and ${\rm supp}(\alpha)\subset\maximize{\phi}$.

By Theorem \ref{construction of Sigmund}, ${\rm supp}(\tilde{\alpha})=f([0,1])$ contains uncountably many ergodic elements
which are fully supported and have positive entropy.
The definition of $\alpha_0$ implies
 \begin{align*}
 {\rm supp}(\alpha_0)={\rm supp}(\tilde{\alpha})\cap A_{\varepsilon^2}.
 \end{align*}
and ${\rm supp}(\alpha_0)$ still contains uncountably many ergodic elements
which are fully supported and have positive entropy.
By \eqref{support ineq} in the proof of Theorem A we have
\begin{align*}
	\alpha_0({\rm supp}(\alpha_0)\cap {\rm supp}(\alpha))=\alpha_0({\rm supp}(\alpha))>0.
\end{align*}
Since $\alpha_0$ is non-atomic and supported in $M_e(X,T)$, this implies ${\rm supp}(\alpha_0) \ \cap\  {\rm supp}(\alpha)$ contains uncountably many ergodic elements
which are fully supported and have positive entropy.
By Proposition \ref{ergodic decomp supp} we have
\begin{align*}
	{\rm supp}(\alpha_0)\cap {\rm supp}(\alpha)\subset {\rm supp}(\alpha)\subset  \maximize{\phi}
\end{align*}	
 and the proof is complete.
\end{proof}
%%%%%%%%%%%%%%%%%%%%%%%%%%%%%%%%%%%%%%%%%%%%%%%%%%%%%%%%
\subsection*{Acknowledgments}
This research is partially supported by the JSPS Core-to-Core Program ``Foundation of a Global Research Cooperation Center in Mathematics focused on Number Theory and Geometry".
%
%%%%%%%%%%%%%%%%%%%%%%%%%%%%%%%%%%%%%%%%%%%%%%%%%%%%%%%%

%

\begin{thebibliography}{OGY}

%\bibitem[AS]{Shiraiwa}
%青木統夫, 白岩謙一, 力学系とエントロピー, 共立出版株式会社, 1985.

\bibitem[BLL]{Lopes}
A. Baraviera, R. Leplaideur and A. Lopes, Ergodic optimization, zero temperature limits and the max-plus algebra, IMPA Mathematical Publications 29th Brazilian Mathematics Colloquium Instituto Nacional de Matem\'atica Pura e Aplicada, Rio de Janeiro, 2013.

%\bibitem[Bou]{Bousch}
%Thierry Bousch, Le poisson n'a pas d'ar\^{e}tes, Annales de I'Institut Henri Poincar\'{e} Probabilit\'{e}s et Statistiques, vol.{\bf 36}, 2000, pp.489-508.

% \bibitem[Bo]{Bowen}
% R. Bowen, {\it Equilibrium states and the ergodic theory of {A}nosov diffeomorphisms}, Lecture Notes in Mathematics, {\bf 470}, Springer-Verlag, Berlin, 1975.

\bibitem[Bl]{Blokh}
A. Blokh, Decomposition of dynamical systems on an interval, Russian Mathematical Surveys, {\bf 38}, 1983, pp.133--134.

\bibitem[BJ]{Bousch Jenkinson}
T. Bousch and O. Jenkinson, Cohomology classes of dynamically non-negative {$C^k$} functions, Inventiones Mathematicae, {\bf 148}, 2002, pp.207--217.

\bibitem[Br]{Bremont}
J. Br\'emont, Entropy and maximizing measures of generic continuous functions, Comptes Rendus Math\'ematique. Acad\'emie des Sciences. Paris, {\bf 346}, 2008, pp.199--201.


\bibitem[Bu]{Buzzi}
J. Buzzi, Specification on the interval, Transactions of the American Mathematical Society, {\bf 349}, 1997, pp.2737--2754.

\bibitem[C]{Contreras}
G. Contreras, Ground states are generically a periodic orbit, Inventiones Mathematicae, {\bf 205}, 2016, pp.383--412.

%\bibitem[CDI]{CDI}
%G. Contreras, J. Delgado and R. Iturriaga,Lagrangian flows: the dynamics of globally minimizing orbits. {II}, 
%Boletim da Sociedade Brasileira de Matem\'atica. Nova S\'erie, {\bf 28}, 1997, pp.155--196.

%\bibitem[CLT]{Contreas}
%Gonzalo Contreras, Artur Oscar Lopes and Philippe Thieullen, Lyapunov minimizing measures for expanding maps of the circle, Ergodic Theory and Dynamical Systems, vol.{\bf 21}, 2001, pp.1379-1409.

\bibitem[CR]{Cortez}
M. Cortez and J. Rivera-Letelier, Invariant measures of minimal post-critical sets of logistic maps,
Israel Journal of Mathematics, {\bf 176}, 2010, pp.157--193.

%\bibitem[DGS]{Sigmund E}
%M. Denker and C. Grillenberger and K. Sigmund, {\it  Ergodic theory on compact spaces}, Lecture Notes in Mathematics, {\bf 527}, Springer-Verlag, Berlin-New York, 1976.

\bibitem[GK]{Gelfert}
K. Gelfert and D. Kwietniak, On density of ergodic measures and generic points, Ergodic Theory and Dynamical Systems, 2016.

\bibitem[GP]{Gorodetski}
A. Gorodetski and Y. Pesin, Path connectedness and entropy density of the space of ergodic hyperbolic measures, arXiv:1505.02216, 2015.

\bibitem[HO]{Hunt Ott}
B. Hunt and E. Ott, Optimal periodic orbits of chaotic systems, Physical review letters, 1996, {\bf 76}, 2254.

\bibitem[I]{Israel}
R. Israel, {\it Convexity in the theory of lattice gases}, Princeton Series in Physics, Princeton University Press, Princeton, 1979.

%\bibitem[J1]{Jenkinson C}
%Oliver Jenkinson, Conjugacy rigidity, cohomological triviality, and barycentres of invariant measures, Ph. D. thesis, Warwick University, 1996.

%\bibitem[J2]{Jenkinson F}
%Oliver Jenkinson, Frequency locking on the boundary of the barycentre set, Experimental Mathematics, vol.{\bf 9} 2000, pp.309-317.

\bibitem[J]{Jenkinson E}
O. Jenkinson, Ergodic optimization, Discrete and Continuous Dynamical Systems. Series A, {\bf 15}, 2006, pp.197--224.

%\bibitem[JM]{Jenkinson Morris}
%Oliver Jenkinson and Ian D. Morris, Lyapunov optimizing measures for $C^1$ expanding maps of the circle, Ergodic Theory and Dynamical Systems, vol.{\bf 28}, 2008, pp.1849-1860.

%\bibitem[Ki]{Kitchens}
%B. Kitchens, {\it Symbolic dynamics}, Springer-Verlag, Berlin, 1998. 

\bibitem[Kr]{Krieger}
W. Krieger, On the uniqueness of the equilibrium state, Mathematical Systems Theory. An International Journal on
              Mathematical Computing Theory, {\bf 8}, 1974/75, pp.97--104.

%\bibitem[KKK]{Konieczny}
%Jakub Konieczny, Michal Kupsa and Dominik Kwietniak, Arcwise Connectedness of the set of Ergodic Measures of Hereditary Shift, arXiv:1610.00672v1, 2016.

%\bibitem[L]{Lorenz}
%E. Lorenz, Deterministic nonperiodic flow, Journal of the atmospheric sciences, {\bf 20}, 1963, pp.130--141.

%\bibitem[LY]{Li}
%T. Li and J. Yorke, Period three implies chaos, The American Mathematical Monthly, {\bf 82}, 1975, pp.985--992.

\bibitem[Man]{Mane}
R. Ma\~n\'e, Generic properties and problems of minimizing measures of
              {L}agrangian systems, Nonlinearity, {\bf 9}, 1996, pp.273--310.

\bibitem[Mat]{Mather}
J. Mather, Action minimizing invariant measures for positive definite {L}agrangian systems, Mathematische Zeitschrift, {\bf 207}, 1991, pp.169--207.

%\bibitem[MT]{Morita}
%Takehiko Morita and Yusuke Tokunaga, Measures with maximum total exponent and generic properties of $C^1$ expanding maps, Hiroshima Mathematical Journal, vol.{\bf 43}, 2013, pp.351-370.

\bibitem[Mo]{Morris}
I. Morris, Ergodic optimization for generic continuous functions, Discrete and Continuous Dynamical Systems. Series A, {\bf 27}, 2010, pp.383--388.

%\bibitem[Mo]{Morris}
% Ian D. Morris, Maximizing measures of generic $\Hol$ functions have zero entropy, Nonlinearity, vol.{\bf 21}, 2008, pp.993-1000

%\bibitem[OGY]{Yorke}
%E. Ott and C. Grebogi and J. Yorke, Controlling chaos, Physical Review Letters, {\bf 64}, 1990, pp.1196--1199.

 \bibitem[PU]{Feliks}
F. Przytycki and M. Urba\'nski, {\it Conformal fractals: ergodic theory methods}, London Mathematical Society Lecture Note Series, {\bf 371}, Cambridge University Press, Cambridge, 2010.

%\bibitem[QS]{Quas Siefken}
%Anthony Quas and Jason Siefken, Ergodic optimization of super-continuous functions on shift spaces, Ergodic Theory and Dynamical Systems, 2012, vol.{\bf 32}, pp.2071-2082.

\bibitem[R]{Ruelle}
D. Ruelle, {\it Thermodynamic formalism}, Cambridge Mathematical Library, Cambridge University Press, Cambridge, 2004.

%\bibitem[Sh]{Shinoda}
%Mao Shinoda, Ergodic maximizing measures of non-generic, yet dense continuous functions, in preparation.

\bibitem[Si1]{Sigmund1}
K. Sigmund, Generic properties of invariant measures for {A}xiom {${\rm
              A}$}\ diffeomorphisms, Inventiones Mathematicae, {\bf 11}, 1970, pp.99--109.

\bibitem[Si2]{Sigmund2}
K. Sigmund, On dynamical systems with the specification property, Transactions of the American Mathematical Society, {\bf 190}, 1974, pp. 285--299.

\bibitem[Si3]{Sigmund3}
K. Sigmund, On the connectedness of ergodic systems, Manuscripta Mathematica, {\bf 22}, 1977, pp.27--32.

%\bibitem[So]{Sorrentino}
%A. Sorrentino, {\it Action-minimizing methods in {H}amiltonian dynamics}, Mathematical Notes, {\bf 50}, Princeton University Press, Princeton, 2015.

%\bibitem[St]{Willard}
%Willard Stephen, {\it General Topology}, Dover Publications, 2012.

 %\bibitem[W]{Walters}
%P. Walters, {\it An introduction to ergodic theory}, Graduate Texts in Mathematics, {\bf 79}, Springer-Verlag, New York-Berlin, 1982.
  
 \bibitem[YH]{Yuan Hunt}
G. Yuan and B. Hunt, Optimal orbits of hyperbolic systems, Nonlinearity, {\bf 12}, 1999, pp.1207--1224.

\end{thebibliography}
\end{document}